\titleformat{\section}{\centering\large\bfseries}{\S\arabic{section}}{1em}{}
\newtheorem{theorem}{Theorem}[section]
\newtheorem{proposition}{Proposition}[section]
\numberwithin{equation}{section}
\begin{document}
\begin{CJK*}{GBK}{song}
\setlength\abovedisplayskip{2pt}
\setlength\abovedisplayshortskip{0pt}
\setlength\belowdisplayskip{2pt}
\setlength\belowdisplayshortskip{0pt}
\title{\bf \Large Well-posedness of a porous medium flow with fractional pressure in Sobolev spaces\author{Xuhuan Zhou$^1$\ \ \  Weiliang Xiao$^{2*}$ }\date{}} \maketitle
 \footnote{Mathematics Subject Classification(2000): 35K55, 35K65, 76S05.}
 \footnote{Keywords: fractional porous medium equation, degenerate diffusion transport equation, Sobolev spaces.}
\begin{center}
\begin{minipage}{135mm}
{\bf \small Abstract}.\hskip 2mm {\small
 The nonnegative solution for a linear degenerate diffusion transport eqution is proved. As a result, we show the existence and uniqueness of the solution for
  the fractional porous medium equation in Sobolev spaces $H^\alpha$ with nonnegative initial data, $\alpha>\frac d2+1$}. Besides, we correct a mistake in our previous paper \cite{zhou01}.
\end{minipage}
\end{center}
\thispagestyle{fancyplain} \fancyhead{}

\section{Introduction}
 In this paper we consider the following porous medium type equation
\begin{equation}\label{equation01}
\partial_t u=\nabla\cdot(u\nabla p),\ \ p=(-\Delta)^{-s}u,\ \  0<s<1.
\end{equation}
Where $x\in \mathbb{R}^n$, $n\geq 2$, and $t>0$. The initial data $u(x,0)\geq 0$.

This model is based on Darcy's law and the pressure is given by an inverse fractional Laplacian operator. It was first introduced by Caffarelli and V\'azquez \cite{caffarelli02}, in which they proved the existence of a weak solution when $u_0$ is a bounded function with exponential decay at infinity. For $\alpha=\frac{n}{n+2-2s}$, Caffarellin, Soria and V\'azquez \cite{caffarelli01} proved that the bounded nonnegative solutions are $C^\alpha$ continuous in a strip of space-time for $s\neq1/2$. And same conclusion for the index $s=1/2$ was proved by Caffarelli and V\'azquez in \cite{caffarelli03}.
\cite{carrillo01,caffarelli04,vazquez01} give a detail description of the large-time asymptotic behaviour of the solutions of (\ref{equation01}).
\cite{biler01, stan01} considered some degenerate cases and show the existence and properties of self-similar solutions.
Allen, Caffarelli and Vasseur \cite{allen01} studied the equation with another fractional time derivative, and proved the H\"older continuity for its weak solutions.

In this paper, we study the existence and uniqueness of solutions of (\ref{equation01}) in Sobolev spaces. The method we used here is novel: Unlike the usual
way to consider the weak solution in $L^{\infty}$ or construct approximate solutions of linear transport systems, we solve equation (\ref{equation01}) by constructing linear degenerate diffusion transport systems. The well-posedness and properties of the constructed linear degenerate diffusion transport are interesting problem themselves. By this way we get that for $s\in [\frac12,1)$, $\alpha>\frac d2+1$, $u_0\in H^\alpha(\mathbb{R}^n)$ nonnegative, then  for some $T_0>0$, the unique solution of (1.1) in $\mathbb{R}^n\times[0,T_0]$ exists. Besides, using the methods and results in this paper, we correct a mistake in our previous paper \cite{zhou01}.

\section{Preliminaries}
Define $\rho(x)\in C_c^\infty(\mathbb{R}^n)$ by
\begin{equation*}
\rho(x)=
\begin{cases}
c_0\exp(-\frac 1{1-|x|^2}), |x|<1,\\
0, |x|\geq 1,
\end{cases}
\end{equation*}
where $c_0$ is selected such that $\int \rho(x)dx=1$.
The operator
$J_\epsilon$ is defined by
\begin{equation*}
J_\epsilon u=\rho_\epsilon*u=\epsilon^{-n}\rho(\frac\cdot\epsilon)*u,
\end{equation*}
and it has the following properties:
\begin{proposition}
(1) $\Lambda^s J_\epsilon u= J_\epsilon \Lambda^s u$, $s\in \mathbb{R}$.\\
(2) For all $u\in L^p(\mathbb{R}^n)$, $v\in H^\alpha(\mathbb{R}^n)$, with $\frac 1p+\frac 1q=1$,
$\int (J_\epsilon f)g=\int f(J_\epsilon g)$.\\
(3) For all $u\in H^\alpha(\mathbb{R}^n)$,
\begin{equation*}
\lim_{\epsilon\rightarrow 0}\|J_\epsilon u-u\|_{H^\alpha}=0,\ \ \lim_{\epsilon\rightarrow 0}\|J_\epsilon u-u\|_{H^{\alpha-1}}\leq C\|u\|_{H^\alpha}.
\end{equation*}
(4) For all $u\in H^\alpha(\mathbb{R}^n)$, $s\in \mathbb{R}$, $k\in \mathbb{Z}\cup\{0\}$, then
\begin{equation*}
\|J_\epsilon u\|_{H^{\alpha+k}}\leq \frac {C_{\alpha k}}{\epsilon^k}\|u\|_{H^\alpha},\ \
\|J_\epsilon D^ku\|_{L^\infty}\leq \frac {C_{k}}{\epsilon^{\frac n2+k}}\|u\|_{H^\alpha}
\end{equation*}
\end{proposition}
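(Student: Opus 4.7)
All four claims are standard properties of the Friedrichs mollifier, so the plan is to reduce everything to the Fourier side and then to elementary estimates for $\hat\rho$. The Fourier transform of $J_\epsilon u$ is $\hat\rho(\epsilon\xi)\hat u(\xi)$; since $\rho$ is radial, $\hat\rho$ is a real, even, Schwartz function with $\hat\rho(0)=1$. With this in hand, (1) is immediate: both $J_\epsilon$ and $\Lambda^s=(-\Delta)^{s/2}$ are Fourier multipliers, namely multiplication by $\hat\rho(\epsilon\xi)$ and $|\xi|^s$ respectively, and scalar multipliers commute. For (2) one uses Fubini together with $\rho_\epsilon(x-y)=\rho_\epsilon(y-x)$ (which holds because $\rho$ is even), so that convolution with $\rho_\epsilon$ is formally self-adjoint against any dual $L^p$--$L^q$ pairing.

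For (3), I would write
$$\|J_\epsilon u-u\|_{H^s}^2=\int_{\mathbb R^n}(1+|\xi|^2)^{s}|\hat\rho(\epsilon\xi)-1|^2|\hat u(\xi)|^2\,d\xi.$$
Taking $s=\alpha$, the integrand is dominated by $4(1+|\xi|^2)^{\alpha}|\hat u|^2\in L^1$ and tends to $0$ pointwise since $\hat\rho(0)=1$ and $\hat\rho$ is continuous, so the limit is $0$ by dominated convergence. The second estimate I interpret as the standard quantitative companion $\|J_\epsilon u-u\|_{H^{\alpha-1}}\le C\epsilon\|u\|_{H^\alpha}$: using that $\hat\rho$ is Lipschitz at the origin one obtains $|\hat\rho(\epsilon\xi)-1|\le C\min(\epsilon|\xi|,1)$, which yields $|\hat\rho(\epsilon\xi)-1|^2(1+|\xi|^2)^{\alpha-1}\le C\epsilon^{2}(1+|\xi|^2)^{\alpha}$ after splitting into the regimes $\epsilon|\xi|\le 1$ and $\epsilon|\xi|\ge 1$.

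For (4), the Sobolev bound follows from the Schwartz decay of $\hat\rho$: writing
$$(1+|\xi|^2)^{k/2}|\hat\rho(\epsilon\xi)|=\epsilon^{-k}(\epsilon^2+|\epsilon\xi|^2)^{k/2}|\hat\rho(\epsilon\xi)|\le C_{k}\epsilon^{-k}\sup_{\eta}(1+|\eta|^2)^{k/2}|\hat\rho(\eta)|,$$
for $\epsilon\le 1$, and plugging into the Plancherel expression of $\|J_\epsilon u\|_{H^{\alpha+k}}^2$. For the $L^\infty$ estimate, I would use $\|f\|_{L^\infty}\le\|\hat f\|_{L^1}$, obtaining
$$\|J_\epsilon D^k u\|_{L^\infty}\le\int|\hat\rho(\epsilon\xi)||\xi|^{k}|\hat u(\xi)|\,d\xi\le\Bigl(\int|\hat\rho(\epsilon\xi)|^{2}|\xi|^{2k}(1+|\xi|^2)^{-\alpha}d\xi\Bigr)^{1/2}\|u\|_{H^\alpha},$$
and then estimating the first factor by $C\epsilon^{-(n/2+k)}$ via the change of variables $\eta=\epsilon\xi$ and the convergence of $\int(1+|\eta|^2)^{-\alpha}|\eta|^{2k}|\hat\rho(\eta)|^2 d\eta$ (which is automatic since $\hat\rho$ is Schwartz; no restriction on $\alpha$ is needed here because $\hat\rho$ supplies the decay).

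\textbf{Main obstacle.} The only mildly delicate point is keeping track of the two regimes $\epsilon|\xi|\lesssim 1$ and $\epsilon|\xi|\gtrsim 1$ in the quantitative estimates of (3) and (4); everything else is standard Plancherel and Schwartz bookkeeping.
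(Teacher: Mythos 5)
The paper states this proposition without proof, treating it as a collection of standard mollifier facts, so there is no in-paper argument to compare against; your Fourier-side proof is correct and complete, and your reading of the second half of (3) as the quantitative bound $\|J_\epsilon u-u\|_{H^{\alpha-1}}\le C\epsilon\|u\|_{H^\alpha}$ is the only sensible interpretation of the (mis)stated inequality. One small caveat in (4): after the substitution $\eta=\epsilon\xi$ the weight is $(1+|\eta/\epsilon|^2)^{-\alpha}$, and dominating it by $(1+|\eta|^2)^{-\alpha}$ to get the clean $\epsilon^{-(n/2+k)}$ rate uses $\alpha\ge 0$ (and $\epsilon\le 1$), so the parenthetical "no restriction on $\alpha$" is a slight overclaim for the stated power of $\epsilon$ when $\alpha<0$ — harmless here, since the paper only ever uses $\alpha>\frac n2+1$.
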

Following propositions can be found in \cite{cordoba01,ju01}.
\begin{proposition}
Suppose that $s>0$ and $1<p<\infty$. If $f,g\in \mathcal {S}$, the Schwartz class, then we have
\begin{equation*}
\|\Lambda^s(fg)-f\Lambda^s g\|_{L^p}\leq c\|\nabla f\|_{L^{p_1}}\|g\|_{\dot{H}^{s-1,p_2}}+c\|g\|_{L^{p_4}}\|f\|_{\dot{H}^{s,p_3}}
\end{equation*}
and
\begin{equation*}
\|\Lambda^s(fg)\|_{L^p}\leq c\|f\|_{L^{p_1}}\|g\|_{\dot{H}^{s,p_2}}+c\|g\|_{L^{p_4}}\|f\|_{\dot{H}^{s,p_3}}
\end{equation*}
with $p_2,p_3\in(1,+\infty)$ such that $\frac 1p=\frac 1{p_1}+\frac 1{p_2}=\frac 1{p_3}+\frac 1{p_4}$.
\end{proposition}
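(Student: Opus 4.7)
The plan is to derive both inequalities via the Littlewood--Paley decomposition and Bony's paraproduct calculus; the first is the classical Kato--Ponce commutator estimate and needs a cancellation argument, while the second is an easier product--type estimate without cancellation.

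First I fix a dyadic Littlewood--Paley partition $\{\Delta_j\}_{j\in\mathbb{Z}}$ with low--frequency cutoffs $S_j=\sum_{k\le j-1}\Delta_k$ and decompose by Bony:
\[
fg=T_fg+T_gf+R(f,g),\qquad T_fg=\sum_j S_{j-1}f\,\Delta_jg,\qquad R(f,g)=\sum_{|j-k|\le 1}\Delta_jf\,\Delta_kg.
\]
Applying the same splitting to $f\,\Lambda^s g$, the commutator $\Lambda^s(fg)-f\,\Lambda^s g$ breaks into a low--high commutator, a high--low piece and a high--high remainder, which I would treat in turn.

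For the low--high commutator $\sum_j[\Lambda^s,S_{j-1}f]\Delta_jg$ I would write $\Lambda^s$ as convolution with its kernel $K_s$ and rewrite each summand as
\[
[\Lambda^s,S_{j-1}f]\Delta_jg(x)=\int K_s(x-y)\bigl(S_{j-1}f(x)-S_{j-1}f(y)\bigr)\Delta_jg(y)\,dy.
\]
A first--order Taylor expansion of $S_{j-1}f$ around $x$ extracts $\nabla f$, and combined with the frequency localization of $\Delta_jg$ at scale $2^j$ produces the crucial $2^{-j}$ gain. Summing over $j$ by the Fefferman--Stein vector--valued maximal inequality together with the Littlewood--Paley characterization of $\dot{H}^{s-1,p_2}$ then yields the bound $c\|\nabla f\|_{L^{p_1}}\|g\|_{\dot{H}^{s-1,p_2}}$. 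For the high--low piece and the diagonal remainder no cancellation is available, but the high--frequency factor carries the $\Lambda^s$ after a harmless frequency--localization argument, and H\"older's inequality delivers the $c\|g\|_{L^{p_4}}\|f\|_{\dot{H}^{s,p_3}}$ contribution.

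The second (product) inequality follows from the same paraproduct splitting of $fg$ with no cancellation to exploit: inside each dyadic block $\Lambda^s$ is placed on whichever factor has the higher frequency, and H\"older combined with the Littlewood--Paley characterization of $\dot{H}^{s,p}$ finishes the estimate. The main obstacle is unambiguously the commutator step: turning the Taylor expansion into a quantitative kernel bound for $K_s$ and then recombining the dyadic pieces in $\ell^2(j)$ via Fefferman--Stein is where all the analytic work lives; the rest is bookkeeping with H\"older and Bernstein's inequality.
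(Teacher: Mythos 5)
The paper does not actually prove this proposition: it is quoted verbatim from the literature (the sentence preceding it reads ``Following propositions can be found in \cite{cordoba01,ju01}''), and it is the standard Kato--Ponce/Kenig--Ponce--Vega commutator and fractional Leibniz estimate. So there is no in-paper argument to compare against; the relevant comparison is with the proofs in those references, and your plan is essentially that proof: Bony decomposition, cancellation in the low--high commutator, H\"older plus Littlewood--Paley for the remaining paraproduct pieces. The strategy is correct.

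Two caveats on the execution, since what you wrote is a plan rather than a proof. First, the formula
\begin{equation*}
[\Lambda^s,S_{j-1}f]\Delta_jg(x)=\int K_s(x-y)\bigl(S_{j-1}f(x)-S_{j-1}f(y)\bigr)\Delta_jg(y)\,dy
\end{equation*}
cannot be taken at face value: for non-integer $s>0$ the kernel of $\Lambda^s$ is a homogeneous distribution of degree $-n-s$, not a locally integrable function, and the single cancellation $S_{j-1}f(x)-S_{j-1}f(y)=O(|x-y|)$ tames the singularity only for $s<1$. The frequency localization you invoke later is not cosmetic here but essential: one must replace $K_s$ by the kernel of $\Lambda^s\widetilde\Delta_j$ (a Schwartz function with $\|\,|z|\,K_s^{(j)}(z)\|_{L^1}\lesssim 2^{j(s-1)}$), and it is this moment bound, not a pointwise bound on $K_s$, that produces the $2^{j(s-1)}$ factor matching $\|g\|_{\dot H^{s-1,p_2}}$. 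Second, among the ``no cancellation'' pieces, the term $T_{\Lambda^sg}f=\sum_jS_{j-1}(\Lambda^sg)\,\Delta_jf$ is not handled by a single application of H\"older on each block: $\|S_{j-1}\Lambda^sg\|_{L^{p_4}}$ is not bounded by $\|g\|_{L^{p_4}}$ uniformly in $j$, and one needs a Young-type summation in $j$ (exploiting $s>0$) to trade the accumulated $2^{ks}$ factors against $2^{js}\|\Delta_jf\|_{L^{p_3}}$ and land on $\|g\|_{L^{p_4}}\|f\|_{\dot H^{s,p_3}}$. Neither point breaks the argument, but both are precisely where the analytic content lives, and your sketch currently waves at them.
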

\begin{proposition}
Let $0\leq s\leq 2$, $f\in \mathcal {S}(\mathbb{R}^n)$, we have the pointwise inequality,
\begin{equation*}
2f(x)\Lambda^s f(x)\geq \Lambda^s f^2(x).
\end{equation*}
\end{proposition}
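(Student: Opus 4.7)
The plan is to prove the pointwise inequality via the singular integral representation of $\Lambda^s$ for $0<s<2$, handling the endpoints $s=0$ and $s=2$ separately. For $0<s<2$ and $f\in\mathcal{S}(\mathbb{R}^n)$, there is the classical representation
\begin{equation*}
\Lambda^s f(x)=c_{n,s}\,\mathrm{P.V.}\int_{\mathbb{R}^n}\frac{f(x)-f(y)}{|x-y|^{n+s}}\,dy
\end{equation*}
with a positive constant $c_{n,s}>0$. This is the natural tool because the kernel is strictly positive, so the sign of the output is controlled by the sign of the numerator.

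Applying the representation to both $\Lambda^s f(x)$ and $\Lambda^s(f^2)(x)$ and subtracting, I would write
\begin{equation*}
2f(x)\Lambda^s f(x)-\Lambda^s f^2(x)=c_{n,s}\,\mathrm{P.V.}\int_{\mathbb{R}^n}\frac{2f(x)(f(x)-f(y))-(f(x)^2-f(y)^2)}{|x-y|^{n+s}}\,dy.
\end{equation*}
The crux is the algebraic identity
\begin{equation*}
2f(x)(f(x)-f(y))-(f(x)^2-f(y)^2)=(f(x)-f(y))^2\geq 0.
\end{equation*}
Combined with the positivity of the kernel $|x-y|^{-n-s}$, this yields the inequality for $0<s<2$.

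For the endpoints, the case $s=0$ is trivial since $2f^2\geq f^2$, and for $s=2$ we have $\Lambda^2=-\Delta$, so the identity $\Delta(f^2)=2f\Delta f+2|\nabla f|^2$ gives immediately
\begin{equation*}
2f(-\Delta f)-(-\Delta f^2)=2|\nabla f|^2\geq 0.
\end{equation*}
This closes the range $0\leq s\leq 2$.

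The only subtlety, and therefore the ``main obstacle'' if one can call it that, is converting the principal value into a genuine Lebesgue integral of a nonnegative integrand near the singularity $y=x$. Since $f\in\mathcal{S}$, a first-order Taylor expansion gives $(f(x)-f(y))^2=O(|x-y|^2)$ as $y\to x$, so the integrand behaves like $|x-y|^{2-n-s}$, which is locally integrable for $s<2$; decay of $f$ at infinity handles the tail. Hence the principal value coincides with an absolutely convergent integral of a nonnegative function, and the inequality follows pointwise.
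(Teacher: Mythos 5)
Your proof is correct and complete: the positive singular-integral kernel of $\Lambda^s$ together with the identity $2f(x)(f(x)-f(y))-(f(x)^2-f(y)^2)=(f(x)-f(y))^2\geq 0$, the direct treatment of the endpoints $s=0$ and $s=2$, and the observation that the nonnegative integrand is $O(|x-y|^{2-n-s})$ near the diagonal (so the principal value is an absolutely convergent integral) is exactly the classical argument. The paper itself offers no proof of this proposition --- it is quoted from \cite{cordoba01,ju01} --- and your argument is the standard one found in those references, so there is nothing to correct or to contrast.
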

\begin{proposition}
Let $\alpha_1$ and $\alpha_2$ be two real numbers such that $\alpha_1<\frac n2$, $\alpha_2<\frac n2$ and $\alpha_1+\alpha_2>0$. Then there exists a const $C=C_{\alpha_1,\alpha_2}\geq 0$ such that for all $f\in \dot{H}^{\alpha_1}$ and $g\in \dot{H}^{\alpha_2}$,
\begin{equation*}
\|fg\|_{\dot{H}^{\alpha}}\leq C\|f\|_{\dot{H}^{\alpha_1}}\|g\|_{\dot{H}^{\alpha_2}},
\end{equation*}
where $\alpha=\alpha_1+\alpha_2-\frac n2$.
\end{proposition}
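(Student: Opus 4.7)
The estimate is a classical product law in homogeneous Sobolev spaces,
and the cleanest route I would take is through duality combined with the
Hardy--Littlewood--Sobolev embedding. My plan is to test $fg$ against an
arbitrary $\varphi\in\dot{H}^{-\alpha}$, split the resulting integral by
H\"older's inequality into three Lebesgue factors, and bound each factor
by the appropriate Sobolev norm of $f$, $g$, or $\varphi$.

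More precisely, I would introduce exponents $p_1,p_2,q$ by
\[
\frac{1}{p_i}=\frac{1}{2}-\frac{\alpha_i}{n}\quad(i=1,2),\qquad
\frac{1}{q}=\frac{1}{2}+\frac{\alpha}{n}.
\]
The hypotheses $\alpha_i<n/2$ together with $\alpha_1+\alpha_2>0$
(which automatically imply $\alpha_i>-n/2$ and $\alpha<n/2$) guarantee
that $p_1,p_2,q$ all lie in $(1,\infty)$, and the algebraic identity
$\alpha=\alpha_1+\alpha_2-\tfrac{n}{2}$ gives $1/p_1+1/p_2+1/q=1$
exactly. Starting from the duality characterization
$\|fg\|_{\dot{H}^\alpha}=\sup_{\|\varphi\|_{\dot{H}^{-\alpha}}=1}
\bigl|\int fg\varphi\,dx\bigr|$ and applying H\"older, I obtain
$\bigl|\int fg\varphi\,dx\bigr|\le \|f\|_{L^{p_1}}\|g\|_{L^{p_2}}
\|\varphi\|_{L^q}$. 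Then the Hardy--Littlewood--Sobolev inequality
$\|\Lambda^{-s}h\|_{L^p}\le C\|h\|_{L^r}$ (with $1/p=1/r-s/n$ and
$0<s<n/r$) converts each Lebesgue norm into the corresponding
$\dot{H}^{\alpha_i}$ or $\dot{H}^{-\alpha}$ norm, finishing the proof
after taking the supremum over $\varphi$.

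The main obstacle here is bookkeeping rather than any single hard idea:
one must verify that the range conditions on $p_1,p_2,q$ are forced by
the three hypotheses, and that the Sobolev embeddings apply on both the
$f,g$ side and the $\varphi$ side, which requires working by density on
Schwartz functions to make the duality pairing rigorous. This is where
the sharpness of the statement comes from: the failure of any of the
three inequalities would either push a H\"older exponent out of
$[1,\infty]$ or break a Sobolev embedding. A more intrinsic alternative
is the Littlewood--Paley/Bony paraproduct decomposition
$fg=T_fg+T_gf+R(f,g)$; the paraproducts are routine via Bernstein and
Sobolev embedding, while the remainder term is precisely where the
hypothesis $\alpha_1+\alpha_2>0$ is used, as the summability condition
for a geometric sum over dyadic scales.
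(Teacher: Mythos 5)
The paper itself offers no proof of this proposition --- it is quoted from the references \cite{cordoba01,ju01} (the standard proof there, and in the Littlewood--Paley literature, is the Bony paraproduct decomposition you mention only in your closing sentence). So the comparison is really between your primary route and the correct general argument, and here there is a genuine gap: the duality--H\"older--Sobolev scheme you propose as the main proof does not cover the stated hypotheses. The embedding $\dot H^{\alpha_i}\hookrightarrow L^{p_i}$ with $\frac1{p_i}=\frac12-\frac{\alpha_i}{n}$ holds only for $0\le\alpha_i<\frac n2$; the hypotheses $\alpha_i<\frac n2$, $\alpha_1+\alpha_2>0$ allow one of the two indices to be negative (e.g.\ $n=2$, $\alpha_1=-\frac12$, $\alpha_2=\frac34$), in which case the inequality $\|f\|_{L^{p_1}}\le C\|f\|_{\dot H^{\alpha_1}}$ is simply false --- for negative smoothness the embedding runs the other way, $L^{p_1}\hookrightarrow\dot H^{\alpha_1}$, and $\dot H^{\alpha_1}$ contains distributions lying in no Lebesgue space. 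The same obstruction hits the dual factor: you need $\dot H^{-\alpha}\hookrightarrow L^{q}$, which requires $\alpha\le 0$, whereas the hypotheses allow $\alpha>0$ (e.g.\ $\alpha_1=\alpha_2=\frac{3n}{8}$ gives $\alpha=\frac n4$). Your parenthetical claim that the hypotheses force the exponents into $(1,\infty)$ is correct, but exponents in range is not the issue; the direction of the Sobolev embeddings is. The H\"older/HLS argument is valid only in the subcase $\alpha_1,\alpha_2\ge 0$ and $\alpha\le 0$.

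To prove the proposition in the generality stated, you must promote your ``alternative'' to the actual proof: write $fg=T_fg+T_gf+R(f,g)$, bound $\|S_{j-1}f\|_{L^\infty}\lesssim 2^{j(\frac n2-\alpha_1)}\|f\|_{\dot H^{\alpha_1}}$ using $\alpha_1<\frac n2$ (this is exactly how a negative $\alpha_1$ is absorbed --- no Lebesgue norm of $f$ alone is ever taken), symmetrically for $T_gf$ using $\alpha_2<\frac n2$, and control the remainder by Bernstein plus the geometric sum $\sum_{k\ge j}2^{(j-k)(\alpha_1+\alpha_2)}$, which converges precisely because $\alpha_1+\alpha_2>0$. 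Your final sentence correctly identifies where each hypothesis enters that argument, but as written the proposal's main line of proof would fail on admissible inputs, so the paraproduct route cannot be left as an unexecuted aside.
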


\section{Main Results}
\begin{theorem}
Let $s\in [\frac12,1]$, $T>0$, $\alpha>\frac n2+1$, $u_{0}\in H^\alpha(\mathbb{R}^n)$, $v\in C([0,T];H^\alpha(\mathbb{R}^n))$, and $v\geq 0$. Then there is a unique solution $u\in C^1([0,T];H^\alpha(\mathbb{R}^n))$ to the linear initial value problem
\begin{equation}
\begin{cases}
\partial_t{u}=\nabla u\cdot \nabla(-\Delta)^{-s}v-v(-\Delta)^{1-s}u,\\
u(x,0)=u_{0}.
\end{cases}
\end{equation}
And if the initial data $u_0\geq 0$, we can get $u\geq0, (x,t)\in \mathbb{R}^n\times[0,T]$.
\end{theorem}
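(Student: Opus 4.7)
The plan is to produce a solution by a Friedrichs-type mollification, close a uniform $H^{\alpha}$ energy estimate, and then argue uniqueness by an $L^{2}$ identity and non-negativity by testing against $u^{-}=\max(-u,0)$. Insert $J_{\epsilon}$ into (3.1) so that the right-hand side becomes a bounded linear operator on $H^{\alpha}(\mathbb{R}^{n})$, e.g.
\begin{equation*}
\partial_{t}u^{\epsilon}=J_{\epsilon}\bigl(\nabla(J_{\epsilon}u^{\epsilon})\cdot\nabla(-\Delta)^{-s}v\bigr)-J_{\epsilon}\bigl(v\,(-\Delta)^{1-s}J_{\epsilon}u^{\epsilon}\bigr),\qquad u^{\epsilon}(0)=J_{\epsilon}u_{0}.
\end{equation*}
By Proposition 2.1 this is a linear ODE in $H^{\alpha}$ whose coefficient depends continuously on $t$ through $v\in C([0,T];H^{\alpha})$, so Picard-Lindel\"of supplies a unique $u^{\epsilon}\in C^{1}([0,T];H^{\alpha})$ for each $\epsilon>0$.

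The heart of the argument is an $\epsilon$-independent $H^{\alpha}$ bound. Apply $\Lambda^{\alpha}$, pair with $\Lambda^{\alpha}u^{\epsilon}$ in $L^{2}$, and integrate by parts in the transport term using $\Delta(-\Delta)^{-s}v=-\Lambda^{2-2s}v$ to extract the contribution $\tfrac{1}{2}\!\int(\Lambda^{\alpha}u^{\epsilon})^{2}\Lambda^{2-2s}v\,dx$ plus a commutator. For the dissipative term, commute $\Lambda^{\alpha}$ past the multiplier $v$; Proposition 2.3 applied to $\Lambda^{\alpha}u^{\epsilon}$ (valid since $2-2s\in[0,1]$ when $s\in[\tfrac{1}{2},1]$) gives
\begin{equation*}
\int v\,\Lambda^{\alpha}u^{\epsilon}\,\Lambda^{2-2s}\Lambda^{\alpha}u^{\epsilon}\,dx\ge\tfrac{1}{2}\!\int(\Lambda^{\alpha}u^{\epsilon})^{2}\Lambda^{2-2s}v\,dx,
\end{equation*}
after moving $v$ by self-adjointness, so the two $\tfrac{1}{2}$-contributions cancel exactly. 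What survives is a pair of commutators $[\Lambda^{\alpha},\nabla(-\Delta)^{-s}v]\nabla u^{\epsilon}$ and $[\Lambda^{\alpha},v]\Lambda^{2-2s}u^{\epsilon}$, which Proposition 2.2 together with the Sobolev embeddings $H^{\alpha}\hookrightarrow W^{1,\infty}$ and $H^{\alpha+1-2s}\hookrightarrow L^{\infty}$ control by $C\|v\|_{H^{\alpha}}\|u^{\epsilon}\|_{H^{\alpha}}^{2}$; the threshold $s\ge\tfrac{1}{2}$ is precisely what keeps $\Lambda^{2-2s}u^{\epsilon}\in H^{\alpha-1}$ so that no derivative is lost. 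Gr\"onwall then gives uniform $H^{\alpha}$ bounds, and a standard compactness argument (with an $L^{2}$ estimate for $\partial_{t}u^{\epsilon}$ for equicontinuity in time) extracts a limit $u\in C([0,T];H^{\alpha})$ solving (3.1), with $\partial_{t}u\in C([0,T];H^{\alpha})$ read off the equation. Uniqueness follows by running the same cancellation at the $L^{2}$ level on the difference of two solutions.

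For the non-negativity, test the equation against $u^{-}$. Using the chain rule on $\{u<0\}$, integration by parts, and $\Delta(-\Delta)^{-s}v=-\Lambda^{2-2s}v$, one finds
\begin{equation*}
\tfrac{d}{dt}\tfrac{1}{2}\|u^{-}\|_{L^{2}}^{2}=\tfrac{1}{2}\!\int(u^{-})^{2}\Lambda^{2-2s}v\,dx-\!\int v\,u^{-}\Lambda^{2-2s}u^{-}\,dx+\!\int v\,u^{-}\Lambda^{2-2s}u^{+}\,dx.
\end{equation*}
Proposition 2.3 applied to $u^{-}$, multiplied by $v\ge 0$ and integrated, bounds the second term above by $-\tfrac{1}{2}\!\int(u^{-})^{2}\Lambda^{2-2s}v$, which cancels the first. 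For the third term, the singular-integral representation
$\Lambda^{2-2s}u^{+}(x)=c_{n,s}\,\mathrm{P.V.}\!\int\frac{u^{+}(x)-u^{+}(y)}{|x-y|^{n+2-2s}}\,dy$
is pointwise $\le 0$ wherever $u^{+}(x)=0$, i.e.\ on $\{u^{-}>0\}$, so $v\,u^{-}\Lambda^{2-2s}u^{+}\le 0$ and the third term is $\le 0$. Hence $\tfrac{d}{dt}\|u^{-}\|_{L^{2}}^{2}\le 0$, and $u_{0}\ge 0$ forces $u\ge 0$ on $[0,T]$. The main obstacle I anticipate is the uniform $H^{\alpha}$ estimate: after the transport/diffusion cancellation, the two surviving Kato-Ponce commutators must be closed in $H^{\alpha}$ without derivative loss, and it is exactly this step that pins the admissible range of $s$ to $[\tfrac{1}{2},1]$; by contrast the non-negativity step is essentially algebraic once Proposition 2.3 is paired with the pointwise sign of $\Lambda^{2-2s}u^{+}$ on the coincidence set $\{u^{+}=0\}$.
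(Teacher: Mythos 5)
Your existence and uniqueness argument is essentially the paper's: the same $J_{\epsilon}$-regularized linear ODE solved by Picard, the same cancellation between the transport term and the degenerate diffusion term via the C\'ordoba--C\'ordoba pointwise inequality (Proposition 2.3) applied to $\Lambda^{\alpha}u^{\epsilon}$ together with $v\ge 0$ and self-adjointness, the same Kato--Ponce commutator bounds (Proposition 2.2) closing the $H^{\alpha}$ estimate, and the same Gr\"onwall-plus-Aubin compactness passage to the limit; the only cosmetic differences are that you mollify the initial datum and claim exact cancellation where the paper is content with a one-sided bound. Where you genuinely diverge is the non-negativity step. The paper argues pointwise: at a first time $t_{0}$ and point $x_{0}$ where $u$ touches zero from above, $\nabla u(x_{0},t_{0})=0$ and $(-\Delta)^{1-s}u(x_{0},t_{0})\le 0$, hence $u_{t}(x_{0},t_{0})\ge 0$. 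You instead run a Stampacchia-type energy argument on $u^{-}$, using Proposition 2.3 on $u^{-}$ to cancel the transport contribution and the sign of $\Lambda^{2-2s}u^{+}$ on the set $\{u^{+}=0\}$ to discard the cross term, concluding $\frac{d}{dt}\|u^{-}\|_{L^{2}}^{2}\le 0$. Your route is arguably more robust: the paper's touching-point argument is delicate on $\mathbb{R}^{n}$ (the infimum need not be attained, and ``$u_{t}\ge 0$ at one point'' does not by itself rule out crossing without a barrier such as $u+\delta t$), whereas your integral version sidesteps attainment entirely. The price you pay is a technical one you should acknowledge: $u^{-}$ is only Lipschitz-truncated, hence in $H^{1}\cap L^{\infty}$ but not in $\mathcal{S}$, so Proposition 2.3 and the splitting $\Lambda^{2-2s}u=\Lambda^{2-2s}u^{+}-\Lambda^{2-2s}u^{-}$ require the standard density/approximation extension of the C\'ordoba--C\'ordoba inequality to such functions (harmless here since $2-2s\le 1$, and the singular integral for $\Lambda^{2-2s}u^{+}$ at points of $\{u<0\}$ converges because $u^{+}$ vanishes on a whole neighbourhood); with that remark added, your proof is complete.
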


\begin{proof}
For any $\epsilon>0$, we consider the following linear problem
\begin{equation}
\begin{cases}
\partial_t{u^\epsilon}=F_\epsilon(u^\epsilon)=J_\epsilon(\nabla J_\epsilon u^\epsilon\cdot \nabla(-\Delta)^{-s} v)-J_\epsilon (v(-\Delta)^{1-s}J_\epsilon u^\epsilon),\\
u^\epsilon(x,0)=u_{0}.\end{cases}
\end{equation}
By Proposition 2.1, Proposition 2.2 and $s\geq \frac 12$ we can estimate
\begin{align*}
\|F_\epsilon(u_1^\epsilon)-F_\epsilon(u_2^\epsilon)\|_{H^\alpha}
&=\|J_\epsilon(\nabla J_\epsilon (u_1^\epsilon-u_2^\epsilon)\cdot \nabla(-\Delta)^{-s} v)-J_\epsilon (v(-\Delta)^{1-s}J_\epsilon (u_1^\epsilon-u_2^\epsilon)\|_{H^\alpha}\\
&\leq C(\epsilon,\|v\|_{H^\alpha})\|u_1^\epsilon-u_2^\epsilon\|_{H^\alpha}.
\end{align*}
By Picard Theorem, for any $\alpha>\frac n2+1, \epsilon>0$, there exists a $T_\epsilon=T_\epsilon(u_*)>0$, problem (3.2) has a unique solution $u^\epsilon\in C^1([0,T_\epsilon);H^\alpha)$. By Proposition 2.1 and Proposition 2.3,
\begin{align*}
\frac 12 \frac {d}{dt}\|u^\epsilon\|^2_{L^2}
&=\int \nabla J_\epsilon u^\epsilon \cdot\nabla(-\Delta)^{-s}vJ_\epsilon u^\epsilon-
\int v(-\Delta)^{1-s}J_\epsilon u^\epsilon J_\epsilon u^\epsilon\\
&\leq \frac 12\int\nabla |J_\epsilon u^\epsilon|^2\cdot\nabla(-\Delta)^{-s}v-\frac 12\int v(-\Delta)^{1-s}|J_\epsilon u^\epsilon|^2\\
&\leq  \frac 12\int|J_\epsilon u^\epsilon|^2(-\Delta)^{1-s}v-\frac 12\int|J_\epsilon u^\epsilon|^2(-\Delta)^{1-s}v=0.
\end{align*}
Moreover, for any $\alpha>0$,
\begin{align*}
\frac 12 \frac {d}{dt}\|\Lambda^\alpha u^\epsilon\|^2_{L^2}
&=\int \Lambda^\alpha(\nabla J_\epsilon u^\epsilon \cdot\nabla(-\Delta)^{-s}v)J_\epsilon\Lambda^\alpha u^\epsilon-
\int \Lambda^\alpha(v(-\Delta)^{1-s}J_\epsilon u^\epsilon) \Lambda^\alpha J_\epsilon u^\epsilon\\
&\leq C\|[\Lambda^\alpha,\nabla(-\Delta)^{-s}v]\nabla J_\epsilon u^\epsilon\|_{L^2}\|\Lambda^\alpha u^\epsilon\|_{L^2}
+\int\nabla(-\Delta)^{-s}v\Lambda^\alpha \nabla J_\epsilon u^\epsilon\Lambda^\alpha J_\epsilon u^\epsilon\\
&+ C\|[\Lambda^\alpha,v](-\Delta)^{1-s}J_\epsilon u^\epsilon\|_{L^2}\|\Lambda^\alpha u^\epsilon\|_{L^2}-\int v\Lambda^\alpha (-\Delta)^{1-s}J_\epsilon u^\epsilon \Lambda^\alpha J_\epsilon u^\epsilon
\end{align*}
By Proposition 2.2 and Sobolev embedding,
\begin{align*}
\|[\Lambda^\alpha,\nabla(-\Delta)^{-s}v]\nabla J_\epsilon u^\epsilon\|_{L^2}&\leq C\|(-\Delta)^{1-s}v\|_{L^\infty}\|\Lambda^{\alpha-1} \nabla J_\epsilon u^\epsilon\|_{L^2}+
\|\nabla(-\Delta)^{-s}v\|_{\dot{H}^\alpha}\|\nabla J_\epsilon u^\epsilon\|_{L^\infty}\\
&\leq C\|v\|_{H^\alpha}\|u^\epsilon\|_{H^\alpha},
\end{align*}
\begin{align*}
\|[\Lambda^\alpha,v](-\Delta)^{1-s}J_\epsilon u^\epsilon\|_{L^2}&\leq C\|\nabla v\|_{L^\infty}\|(-\Delta)^{1-s}J_\epsilon u^\epsilon\|_{\dot{H}^{\alpha-1}}+
\|v\|_{\dot{H}^\alpha}\|(-\Delta)^{1-s}J_\epsilon u^\epsilon\|_{L^\infty}\\
&\leq C\|v\|_{H^\alpha}\|u^\epsilon\|_{H^\alpha}.
\end{align*}
By Proposition 2.3,
\begin{align*}
&\int\nabla(-\Delta)^{-s}v\Lambda^\alpha \nabla J_\epsilon u^\epsilon\Lambda^\alpha J_\epsilon u^\epsilon-\int v\Lambda^\alpha (-\Delta)^{1-s}J_\epsilon u^\epsilon \Lambda^\alpha J_\epsilon u^\epsilon\\
& \leq\frac12\int \nabla(-\Delta)^{-s}v\nabla(\Lambda^\alpha J_\epsilon u^\epsilon)^2-\frac12\int v(-\Delta)^{1-s}(\Lambda^\alpha J_\epsilon u^\epsilon)^2\\
&\leq C\|v\|_{H^\alpha}\|u^\epsilon\|^2_{H^\alpha}.
\end{align*}
Combine the above estimates,
\begin{equation*}
\frac d {dt}\|u^\epsilon(\cdot,t)\|_{H^{\alpha}}\leq C\|v\|_{H^\alpha}\|u^\epsilon\|_{H^\alpha}.
\end{equation*}
By Gronwall's inequality,
\begin{equation*}
\|u^\epsilon(\cdot,t)\|_{H^\alpha}\leq \|u_0\|_{H^\alpha}\exp(C\sup_{0\leq t\leq T}\|v\|_{H^\alpha}).
\end{equation*}
Such the solution $u^\epsilon$ exists on $[0,T]$.
Similarly,
\begin{equation*}
\frac d {dt}\|u^\epsilon(\cdot,t)\|_{H^{\alpha-1}}\leq C\|v\|_{H^\alpha}\|u^\epsilon\|_{H^\alpha}\leq C(\|v\|_{H^\alpha}, \|u_0\|_{H^\alpha}, T).
\end{equation*}
By Aubin compactness theorem, there is a subsequence of $\{u^{\frac 1n}\}_{n\geq1}$ that convergence strongly to $u$ in $C([0,T];H^\alpha)$. If $\alpha>\frac d2+1$,
$H^\alpha \hookrightarrow C^1$, so $u$ is a solution of (3.1).

If $u,\tilde{u}$ are two solutions of problem (3.1), then $w=u-\tilde{u}$ satisfies
\begin{equation*}
\begin{cases}
\partial_t{w}=\nabla w\cdot \nabla(-\Delta)^{-s}v-v(-\Delta)^{1-s}w,\\
w(x,0)=0.
\end{cases}
\end{equation*}
Similarly, we can get $\frac d{dt}\|w\|_{L^2}\leq 0$ and $\frac d{dt}\|w\|_{\dot{H}^\alpha}\leq \|v\|_{H^\alpha}\|w\|_{H^\alpha}$, i.e,
$\frac d{dt}\|w\|_{H^\alpha}\leq \|v\|_{H^\alpha}\|w\|_{H^\alpha}$. By Gronwall's inequality we can deduce $u(x,t)=0$, $(x,t)\in \mathbb{R}^n\times[0,T]$.

Since $u_0\geq 0$ then if there exists a first time $t_0$ where for some point $x_0$ we have $u(x_0,t_0)=0$, then $(x_0,t_0)$ will correspond to a minimum point
and therefore $\nabla u(x_0,t_0)=0$, and
\begin{equation*}
(-\Delta)^{1-s}u(x)=c\int \frac {u(x)-u(y)}{|y|^{n+2-2s}}dy\leq0.
\end{equation*}
 Hence $ u_t|_{(x_0,t_0)}\geq 0$. So $u(x,t)\geq 0$ for all $(x,t)\in \mathbb{R}^n\times[0,T]$.
\end{proof}

\begin{theorem}
Let $n\geq 2$, $s\in [\frac12,1)$, $\alpha>\frac d2+1$, $u_0\in H^\alpha(\mathbb{R}^n)$, and $u_0\geq 0$. Then there is a unique solution $u\in C^1([0,T_0],H^\alpha(\mathbb{R}^n))$ to the linear initial value problem
\begin{equation*}
\begin{cases}
\partial_t{u}=\nabla\cdot(u\nabla(-\Delta)^{-s}u),\\
u(x,0)=u_{0}.
\end{cases}
\end{equation*}
And if the initial data $u_0\geq 0$, we can get $u\geq0, (x,t)\in \mathbb{R}^n\times[0,T_0]$.
\end{theorem}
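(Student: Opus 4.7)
The plan is to drive the nonlinear equation by Picard iteration based on Theorem~3.1. Set $u^{(0)}\equiv u_0$ and, for $k\geq 0$, let $u^{(k+1)}$ be the solution furnished by Theorem~3.1 of
$$\partial_t u^{(k+1)}=\nabla u^{(k+1)}\cdot\nabla(-\Delta)^{-s}u^{(k)}-u^{(k)}(-\Delta)^{1-s}u^{(k+1)},\qquad u^{(k+1)}(\cdot,0)=u_0.$$
Since the nonlinear right-hand side factors as $\nabla u\cdot\nabla(-\Delta)^{-s}u-u(-\Delta)^{1-s}u$, any strong limit of $\{u^{(k)}\}$ in a topology controlling $C^1$ will solve the original PDE. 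The hypotheses needed to invoke Theorem~3.1 at each step, namely $v\in C([0,T];H^\alpha)$ with $v\geq 0$, propagate inductively from $u^{(0)}=u_0\geq 0$, so every $u^{(k)}$ is nonnegative as well.

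The next step is a uniform-in-$k$ bound on a short time interval $[0,T_0]$. The $H^\alpha$ energy inequality from the proof of Theorem~3.1, applied with $v=u^{(k)}$, gives
$$\|u^{(k+1)}(\cdot,t)\|_{H^\alpha}\leq\|u_0\|_{H^\alpha}\exp\Bigl(C\int_0^t\|u^{(k)}(\cdot,\tau)\|_{H^\alpha}\,d\tau\Bigr).$$
Setting $M:=2\|u_0\|_{H^\alpha}$ and choosing $T_0>0$ so that $\|u_0\|_{H^\alpha}\exp(CMT_0)\leq M$, induction on $k$ yields $\|u^{(k)}(t)\|_{H^\alpha}\leq M$ uniformly on $[0,T_0]$. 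Feeding this back into the equation bounds $\partial_t u^{(k)}$ uniformly in $L^\infty(0,T_0;H^{\alpha-1})$.

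The main obstacle is upgrading these uniform bounds to strong convergence in a topology fine enough to handle the nonlinearity. I would close a contraction on $w^{(k)}:=u^{(k+1)}-u^{(k)}$, which satisfies
$$\partial_t w^{(k)}=\nabla w^{(k)}\cdot\nabla(-\Delta)^{-s}u^{(k)}-u^{(k)}(-\Delta)^{1-s}w^{(k)}+\nabla u^{(k)}\cdot\nabla(-\Delta)^{-s}w^{(k-1)}-w^{(k-1)}(-\Delta)^{1-s}u^{(k)}.$$
Testing against $w^{(k)}$ in $L^2$, the first two terms on the right cancel thanks to Proposition~2.3 and $u^{(k)}\geq 0$, exactly as in the $L^2$ step of Theorem~3.1. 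The remaining two forcing terms are controlled by $CM\|w^{(k-1)}\|_{H^{\alpha-1}}\|w^{(k)}\|_{L^2}$ using $H^\alpha\hookrightarrow W^{1,\infty}$ together with the negative order $1-2s\leq 0$ of $\nabla(-\Delta)^{-s}$ for $s\in[\tfrac12,1)$; the only delicate point is the low-frequency behavior of this operator, which I would absorb either by interpolation with the uniform $H^\alpha$ bound or by carrying out the contraction in $H^{\alpha-1}$ throughout. A further reduction of $T_0$ makes the Gronwall prefactor strictly less than $1$, so $\{u^{(k)}\}$ is Cauchy in $C([0,T_0];H^{\alpha-1})$; interpolating with the uniform $H^\alpha$ bound upgrades this to strong convergence in $C([0,T_0];H^\beta)$ for every $\beta<\alpha$, and choosing $\beta>\tfrac n2+1$ keeps $H^\beta\hookrightarrow C^1$ so one can pass to the limit in every term.

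The limit $u$ then lies in $L^\infty(0,T_0;H^\alpha)$, is nonnegative as a limit of nonnegative iterates, and solves the nonlinear PDE; continuity into $H^\alpha$ together with the $C^1$ regularity of $\partial_t u$ follow from the equation itself by a standard Bona--Smith approximation argument. Uniqueness is verified directly: applying the same $L^2$ cancellation and Gronwall to the difference of two solutions forces them to agree, exactly as at the end of the proof of Theorem~3.1; nonnegativity can alternatively be re-derived by the first-zero argument given there.
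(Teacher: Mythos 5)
Your proposal is correct and shares the paper's overall architecture --- the same Picard iteration $u^{(k)}\mapsto u^{(k+1)}$ through the linear degenerate problem of Theorem~3.1, the same choice of $T_0$ to propagate the uniform bound $\|u^{(k)}\|_{H^\alpha}\leq 2\|u_0\|_{H^\alpha}$, and the same $L^2$ energy argument for uniqueness --- but it diverges at the convergence step. The paper bounds $\partial_t u^{(k)}$ in $H^{\alpha-1}$ and invokes the Aubin compactness theorem to extract a strongly convergent subsequence; you instead close a contraction for $w^{(k)}=u^{(k+1)}-u^{(k)}$ in a low norm and upgrade by interpolation with the uniform $H^\alpha$ bound. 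Your route is the more quantitative one: it produces convergence of the whole sequence (not a subsequence), gives an explicit rate, and is honest about the fact that strong convergence is only obtained in $H^\beta$ for $\beta<\alpha$, with continuity of $u$ into $H^\alpha$ itself requiring an extra argument (your Bona--Smith remark); the paper's statement that Aubin yields convergence in $C([0,T];H^\alpha)$ glosses over exactly this point. Conversely, the compactness route spares the paper the difference estimate entirely. One concrete simplification for your argument: the term $\nabla u^{(k)}\cdot\nabla(-\Delta)^{-s}w^{(k-1)}$, whose low-frequency behavior you flag as delicate, is handled cleanly in $L^2$ by the paper's Proposition~2.4 with exponents $\alpha_1=\tfrac n2+1-2s$, $\alpha_2=2s-1$ (valid for $s>\tfrac12$, $n\geq2$, and degenerating to the bounded Riesz transform when $s=\tfrac12$), which yields $\|\nabla u^{(k)}\cdot\nabla(-\Delta)^{-s}w^{(k-1)}\|_{L^2}\leq C\|u^{(k)}\|_{H^\alpha}\|w^{(k-1)}\|_{L^2}$ and lets you run the contraction directly in $C([0,T_0];L^2)$ without retreating to $H^{\alpha-1}$; this is precisely the estimate the paper uses for the term $I_2$ in its uniqueness proof.
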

\begin{proof}
Set $u^1=u_0$. Notice $\partial_t{u}=\nabla\cdot(u\nabla(-\Delta)^{-s}u)=\nabla u\cdot \nabla (-\Delta)^{-s}u-u(-\Delta)^{1-s}u$,
 we construct a sequence $\{u^{n}\}$ defined by solving the following systems
\begin{equation}\label{3.1}
\begin{cases}
\partial_t{u^{n+1}}=\nabla u^{n+1}\cdot \nabla(-\Delta)^{-s}u^{n}-u^n(-\Delta)^{1-s}u^{(n+1)},\\
u^{n+1}(x,0)=u_{0}.
\end{cases}
\end{equation}
Firstly by Theorem 3.1, we get $u^2\in C([0,T);H^\alpha)$, $\forall T<\infty$, and it satisfies $u^2\geq 0$ and
\begin{equation*}
\sup_{0\leq t\leq T}\|u^2\|_{H^\alpha}\leq \|u_0\|_{H^\alpha}\exp(C\|u^1\|_{H^\alpha}T).
\end{equation*}
If $\exp(2C\|u_1\|_{H^\alpha}T_0)\leq2$, for example $T_0=\frac {\ln 2}{2C(1+\|u_0\|_{H^\alpha})}$, we have $\sup_{0\leq t\leq T_0}\|u^2\|_{H^\alpha}\leq 2\|u_0\|_{H^\alpha}$. By the standard induction argument, if $u^{n}\in C([0,T_0];H^\alpha)$ , $u^n\geq 0$ is a solution of (\ref{3.1}) with $\|u^n\|_{H^\alpha}\leq 2\|u_0\|_{H^\alpha}$, by Theorem 3.1 we can get $u^{n+1}\in C([0,T_0];H^\alpha)$ , $u^{n+1}\geq0$ and
\begin{equation*}
\sup_{0\leq t\leq T_0}\|u^{n+1}\|_{H^\alpha}\leq \|u_0\|_{H^\alpha}\exp(C\|u^{n}\|_{H^\alpha}T_0)\leq 2\|u_0\|_{H^\alpha},
\end{equation*}
\begin{equation*}
\frac d{dt}\|u^{n+1}\|_{H^{\alpha-1}}\leq C\|u^n\|_{H^\alpha}\|u^{n+1}\|_{H^\alpha}\leq C\|u_0\|^2_{H^\alpha}.
\end{equation*}
By Aubin compactness theorem, there is a subsequence of $u^{n}$ that convergence strongly to $u$ in $C([0,T];H^\alpha)$. If $u\geq 0,\tilde{u}\geq0$ are two solutions of problem (3.1), then $w=u-\tilde{u}$ satisfies that
\begin{equation*}
\begin{cases}
\partial_t{w}=\nabla\cdot(w \nabla(-\Delta)^{-s}u)+\nabla \cdot(\tilde{u}\nabla(-\Delta)^{-s}w),\\
w(x,0)=0.
\end{cases}
\end{equation*}
By Proposition 2.2 we can get
\begin{align*}
\frac12\frac d{dt}{\|w\|_{L^2}^2}&=\int w\nabla\cdot(w \nabla(-\Delta)^{-s}u)+\int w\nabla \tilde{u}\cdot\nabla(-\Delta)^{-s}w-\int w \tilde{u}(-\Delta)^{1-s}w\triangleq I_1+I_2+I_3.
\end{align*}
$I_1,I_3$ can be estimated as:
\begin{align*}
&I_1=\int w\nabla w\cdot \nabla(-\Delta)^{-s}u=\frac12\int\nabla w^2\cdot \nabla(-\Delta)^{-s}u=\frac12\int w^2(-\Delta)^{1-s}u\leq C\|u\|_{H^{\alpha}}\|w\|^2_{L^2},\\
&I_3\leq-\frac 12\int \tilde{u}(-\Delta)^{1-s}w^2=\int-\frac 12(-\Delta)^{1-s}\tilde{u}\cdot w^2\leq C\|u\|_{H^{\alpha}}\|w\|^2_{L^2}.
\end{align*}
When $s>\frac 12$,
\begin{align*}
I_2&\leq C\|w\|_{L^2}\|\nabla u\cdot\nabla(-\Delta)^{-s}w\|_{L^2}        \\
   &\leq C\|w\|_{L^2}\|\nabla u\|_{\dot{H}^{\frac n2+1-2s}}\|\nabla(-\Delta)^{-s}w\|_{\dot{H}^{2s-1}}
   \leq C\|u\|_{H^{\alpha}}\|w\|^2_{L^2}.
\end{align*}
When $s=\frac 12$, the above estimates are still valid.  Combine the above estimates, $\frac {d}{dt}\|w\|_{L^2}\leq C\|w\|_{L^2}\|u\|_{H^{\alpha}}$. By Gronwall's inequality we can deduce $w(x,t)=0$ on $[0,T_0]$.
\end{proof}

\section{Correction}
In our previous paper \cite{zhou01} in which we establish the well-posedness result in Besov spaces for the equation (\ref{equation01}), there is a mistake in page 9 when we estimate the term $J_4'$ in equation (4.5). To correct the mistake, we modify our proof in the way as following: First we construct the approximate equation: 
\begin{equation}\label{equation41}
\begin{cases}
 u_t^{(n+1)}=\nabla u^{(n+1)} \cdot \nabla(-\triangle)^{-s}u_{\epsilon}^{(n)}-u_{\epsilon}^{(n)}(-\triangle)^{1-s}u^{(n+1)};   \\
 u^{(n+1)}(0)=\sigma_{\epsilon}*u_0,\quad u^{(1)}=\sigma_{\epsilon}*u_0.
\end{cases}
\end{equation}
By the argument in section 2, we can always find the sequence $u^{(n)}$ who solves the linear systems (\ref{equation41}). Assume $u_0\geq0$, we prove $u^{(n+1)}\geq0$.
Inspired by \cite{caffarelli02}, we assume that $x_0$ is a point of minimum of $u^{(n+1)}$ at time $t=t_0$. This indicate that
$\nabla u^{(n+1)}(x_0)=0$, and 
\[
 (-\triangle)^{1-s} u^{(n+1)}(x_0)=c\int \frac{u(x_0)-u(y)}{|y|^{n+2(1-s)}}dy\leq0.
\]
Thus we deduce $\frac{\partial}{\partial t}u^{(n+1)}\big|_{t=t_0}\geq0$, and by induction there holds
     $$u^{(n+1)}\geq 0.$$
By the same way as \cite{zhou01}, taking $\triangle_j$ on (\ref{equation41}), we obtain
\begin{align*}
 \partial_t \triangle_j u^{(n+1)}
& =\sum[\triangle_j,\partial_i (-\triangle)^{-s} u_{\epsilon}^{(n)}] \partial_i u^{{n+1}}
   +\sum \partial_i(-\triangle)^{-s} u_{\epsilon}^{(n)} \triangle_j (\partial_i u^{(n+1)})	\\
&-[\triangle_j,u_{\epsilon}^{(n)}] (-\triangle)^{1-s} u^{(n+1)}
  -u_{\epsilon}^{(n)} \triangle_j (-\triangle)^{1-s}u^{(n+1)}.
\end{align*}
Multiplying both sides by $\frac{\triangle_j u^{(n+1)}}{|\triangle_j u^{(n+1)}|}$, and integrating over 
$\mathbb{R}^d$, then denote the corresponding part in the right side by $J_1', J_2', J_3', J_4'$, respectively. We obtain the estimates,
\begin{align*}
 J_1' \leq C 2^{-j\alpha}\|u^{(n+1)}\|_{B_{1,\infty}^{\alpha}} \|u^{(n)}\|_{B_{1,\infty}^{\alpha+1-2s}}	\\
 J_2' \leq C 2^{-j\alpha}\|u^{(n+1)}\|_{B_{1,\infty}^{\alpha}} \|u^{(n)}\|_{B_{1,\infty}^{\alpha+1-2s}}	\\
 J_3' \leq C 2^{-j\alpha}\|u^{(n)}\|_{B_{1,\infty}^{\alpha}} \|u^{(n+1)}\|_{B_{1,\infty}^{\alpha+1-2s}}.
\end{align*}
And the estimate for the term $J_4'$ is replaced by  
\begin{align*}
J_4' =
 &-\int u^{(n)}\triangle_j (-\triangle)^{1-s} u^{(n+1)} \frac{\triangle_j u^{(n+1)}}{|\triangle_j u^{(n+1)}|} \\
 &\leq -\int u^{(n)} (-\triangle)^{1-s} |\triangle_j u^{(n+1)}|      \\
 &\leq -\int (-\triangle)^{1-s} u^{(n)}  |\triangle u^{(n+1)}|   \\
 &\leq 2^{-j\alpha}\|u^{n}\|_{B_{1,\infty}^{r+2-2s}}\|u^{(n+1)}\|_{B_{1,\infty}^{\alpha}}.
\end{align*}
Here $r>d$ be a arbitrary real number, and in the first inequality we use the following pointwise estimate:
\begin{proposition}\cite{miao01}
 Set $0\leq \alpha \leq 2$, $p\geq1$. Then for any $f\in\mathcal{S}(\mathbb{R}^d)$, there holds
 $$p|f(x)|^{p-2}f(x)\Lambda^{\alpha}f(x)\geq \Lambda^{\alpha}|f(x)|^p.$$
\end{proposition}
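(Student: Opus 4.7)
The strategy is to reduce the inequality to the convexity of $\phi(t)=|t|^p$ (which holds since $p\geq 1$) combined with the singular integral representation of the fractional Laplacian. For $0<\alpha<2$ I would use
\[
\Lambda^\alpha f(x)=c_{d,\alpha}\,\mathrm{p.v.}\int_{\mathbb{R}^d}\frac{f(x)-f(y)}{|x-y|^{d+\alpha}}\,dy.
\]
The tangent-line inequality for convex $C^1$ functions gives $\phi(b)-\phi(a)\geq \phi'(a)(b-a)$, which applied with $a=f(x)$, $b=f(y)$ and $\phi'(t)=p|t|^{p-2}t$ yields the pointwise bound
\[
|f(x)|^p-|f(y)|^p\le p|f(x)|^{p-2}f(x)\bigl(f(x)-f(y)\bigr).
\]
Dividing by $|x-y|^{d+\alpha}$ and integrating (first on $\{|x-y|>\varepsilon\}$ and then letting $\varepsilon\to 0$) gives $\Lambda^\alpha|f|^p(x)\le p|f(x)|^{p-2}f(x)\,\Lambda^\alpha f(x)$.

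The endpoint $\alpha=2$ is a direct calculus identity rather than an integral estimate: expanding
\[
\Delta(|f|^p)=p(p-1)|f|^{p-2}|\nabla f|^2+p|f|^{p-2}f\,\Delta f,
\]
one gets $p|f|^{p-2}f\,\Lambda^2 f-\Lambda^2|f|^p=p(p-1)|f|^{p-2}|\nabla f|^2\ge 0$. The case $\alpha=0$ is trivial equality, so together with continuity/interpolation the full range $0\leq \alpha\leq 2$ is covered.

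The main obstacle is that for $1\le p<2$ the function $\phi(t)=|t|^p$ is only $C^1$, with $\phi'$ Hölder continuous and $\phi''$ blowing up at $t=0$; consequently the manipulations above are not literally justified at zeros of $f$, and the factor $|f|^{p-2}$ in the claimed inequality is singular there. I would handle this by regularization: set $\phi_\delta(t)=(t^2+\delta)^{p/2}$, which is smooth, convex, and satisfies $\phi_\delta'(t)=p(t^2+\delta)^{(p-2)/2}t$. Running the convexity argument with $\phi_\delta$ produces the analogous inequality with no singularity. Using that $f\in\mathcal{S}(\mathbb{R}^d)$ gives enough decay to dominate the integrands uniformly in $\delta$, I would then pass to the limit $\delta\to 0^+$ by dominated convergence to recover the stated inequality, interpreting it in the a.e.\ sense at points where $f(x)=0$ (and noting that both sides of the final inequality vanish there when $p>1$, while for $p=1$ the statement reduces to the familiar Córdoba--Córdoba inequality with $\phi'=\mathrm{sign}(f)$).
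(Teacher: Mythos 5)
The paper offers no proof of this proposition: it is quoted verbatim from the reference \cite{miao01}, where it is established by exactly the argument you outline, namely the convexity inequality $\phi(b)-\phi(a)\geq\phi'(a)(b-a)$ for $\phi(t)=|t|^p$ fed into the singular-integral representation of $\Lambda^{\alpha}$ for $0<\alpha<2$, with the endpoints $\alpha=0,2$ handled directly. Your proposal is correct and essentially coincides with that standard C\'ordoba--C\'ordoba-type proof, and you have rightly flagged the only delicate point (the regularization needed for $1\leq p<2$ at zeros of $f$, which is exactly the case $p=1$ the paper actually uses in Section 4).
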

\noindent
Taking $r$ such that $r+2-2s<\alpha$, e.g. set $r=\alpha-1$, we conclude 
\[
  \frac{d}{dt}\|u^{(n+1)}\|_{B_{1,,\infty}^\alpha}\leq \|u^{(n)}\|_{B_{1,,\infty}^\alpha}\|u^{(n+1)}\|_{B_{1,,\infty}^\alpha}.
\]
The other parts of the proof are no difference.

\section*{Acknowledgement}
We are very grateful for PhD. Mitia Duerinckx for pointing out our mistake and share some good views on this problem. This paper is supported by the NNSF of China under grants No. 11601223 and No.11626213.


\vskip 10mm
\noindent
$^1$ \  Department of Information Technology, Nanjing Forest Police College,  210023 Nanjing, China\\
\indent Email: zhouxuhuan@163.com \\
$^{2*}$ \ School of Applied Mathematics,  Nanjing University of Finance and Economics, 210023 Nanjing,
China\\
\indent Email: xwltc123@163.com\\
\end{CJK*}
\end{document}